%-----------------------------------------------------------------------
% Beginning of tran-l-template.tex
%-----------------------------------------------------------------------
%
%     This is a topmatter template file for TRAN for use with AMS-LaTeX.
%
%     Templates for various common text, math and figure elements are
%     given following the \end{document} line.
%
%%%%%%%%%%%%%%%%%%%%%%%%%%%%%%%%%%%%%%%%%%%%%%%%%%%%%%%%%%%%%%%%%%%%%%%%

%     Remove any commented or uncommented macros you do not use.

\documentclass[final,5p,times,twocolumn]{elsarticle}
\usepackage{amssymb}

\journal{Systems and Control Letters}
\usepackage[utf8]{inputenc}
\usepackage{hyperref}
\usepackage{amsmath}
\usepackage{amsthm}
\usepackage{amssymb}
\usepackage{latexsym}

\makeatletter
\usepackage{graphicx}
\usepackage{caption}

\newcommand{\X}{\mathcal{X}}
\newcommand{\Y}{\mathcal{Y}}
\newcommand{\R}{\mathbb{R}}
\newcommand{\ep}{\epsilon}
\newcommand{\F}{\mathcal{F}}
\newcommand{\N}{\mathcal{N}}
\newcommand{\C}{\widetilde{C}}
\newcommand{\0}{\textbf{0}}

%     If you need symbols beyond the basic set, uncomment this command.

\usepackage{mathrsfs}
\usepackage{color}

\definecolor{dmagenta}{rgb}{.4,.1,.5}
\definecolor{dblue}{rgb}{.0,.0,.5}
\definecolor{mblue}{rgb}{.0,.0,.7}
\definecolor{ddblue}{rgb}{.0,.0,.4}
\definecolor{dred}{rgb}{.4,.0,.0}
\definecolor{mred}{rgb}{.5,.0,.0}
\definecolor{dgreen}{rgb}{.0,.5,.0}
\definecolor{Eeom}{rgb}{.0,.0,.5}
\definecolor{cm}{cmyk}{1,.0,.0,.0}

%     If your article includes graphics, uncomment this command.
\usepackage{graphicx}

%     If the article includes commutative diagrams, ...
%\usepackage[cmtip,all]{xy}

\usepackage{xcolor}

%     Update the information and uncomment if AMS is not the copyright
%     holder.
%\copyrightinfo{2009}{American Mathematical Society}

\newtheorem{theorem}{Theorem}[section]
\newtheorem{lemma}[theorem]{Lemma}

\theoremstyle{definition}

\theoremstyle{remark}
\newtheorem{remark}[theorem]{Remark}

\numberwithin{equation}{section}

\begin{document}

% \title[short text for running head]{full title}
\title{A dynamic view of some anomalous phenomena in SGD}

%    Only \author and \address are required; other information is
%    optional.  Remove any unused author tags.

%    author one information
% \author[short version for running head]{name for top of paper}

\author{Vivek S. Borkar}
\address{Department of Electrical Engineering,
Indian Institute of Technology Bombay,
Mumbai 400076, India,
%\curraddr{}
Email: borkar.vs@gmail.com}
%\thanks{}

%    \subjclass is required.
%\subjclass[2010]{Primary }

\begin{abstract}
It has been observed by Belkin et al.\  that over-parametrized neural networks exhibit a `double descent' phenomenon. That is,  as the model complexity (as reflected in the number of features)  increases, the test error initially decreases, then increases, and then decreases again.  A counterpart of this phenomenon in the time domain has been noted in the context of epoch-wise  training, viz., the test error decreases with the number of  iterates, then increases, then decreases again. Another anomalous phenomenon is that of \textit{grokking} wherein two regimes of descent are interrupted by  a third regime wherein the mean loss remains almost constant. This note presents a plausible explanation for these and related phenomena by using the theory of two time scale stochastic approximation, applied to the continuous time limit of the gradient dynamics. This gives a novel perspective for  an already well studied theme.
\end{abstract}

\maketitle

\noindent \textit{Key words:} stochastic gradient descent; temporal double descent; grokking; overparametrized neural networks; stochastic approximation; singularly perturbed differential equations; two time scales

\section{Introduction}

Many anomalous phenomena regarding the temporal evolution of stochastic gradient descent (SGD) as applied to over-parametrized neural networks have been pointed out in literature. We specifically consider the following:\\

\noindent 1.\ \textit{Temporal double descent}: Beginning with Belkin et al.\ \cite{Belkin1}, the phenomenon of `\textit{double descent}'  in the training of over-parametrized neural networks using stochastic gradient descent (SGD) has been flagged and extensively studied from various angles \cite{Abascal, Belkin0, Belkin2, Cherkassky, dascoli, Hastie, Kuz, Lafon, Mei, Schaeffer}. See also some generalizations  such as \cite{Belkin0, dascoli}. (See \cite{Loog} for some pre-history.) The most common formulation has been in terms of increasing model complexity as reflected in an increasing basket of features leading to increasing number of parameters. Simply put, as the number of parameters increases, the test error for SGD first decreases, then increases, then decreases again. {\color{violet} However, there has also been a parallel, albeit smaller, literature on \textit{temporal} double descent, i.e., double descent as the iterate count evolves.  This has already been introduced in the context of epoch-wise gradient descent \cite{Davies, Heckel, Nakkiran, Olmin, Pez, Stephenson}. See Figure 1 for a schematic. The point of view taken here is that, while the training error will monotonically decrease to zero, if at various stages of the iteration, we test the learned model on an independent set of test samples, then the corresponding test error can exhibit the temporal double descent.}\\

\begin{figure}[h!]
\begin{center}
\includegraphics[scale=0.7]{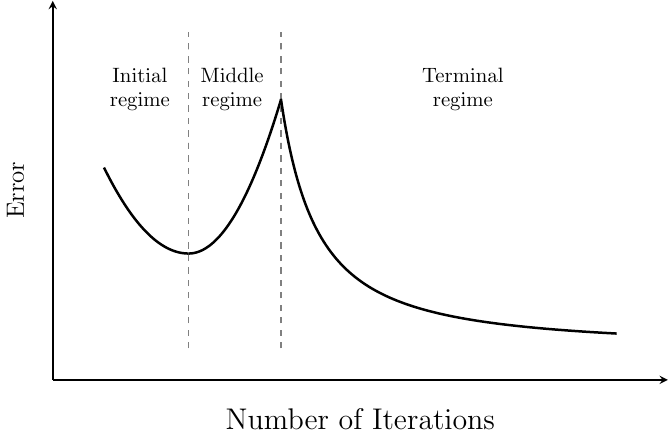}
\caption{}
\end{center}
\end{figure}

\noindent 2.\ \textit{Grokking:} This refers to the phenomenon wherein between two regimes of descent, the SGD exhibits a nearly flat patch for a significant duration \cite{Liu, Mallinar, Power, Varma}. This raises serious issues about the choice of stopping criteria.  See Figure 2 for a schematic.\\

\begin{figure}[h!]
\begin{center}
\includegraphics[scale=0.7]{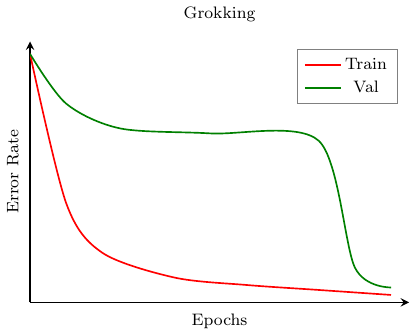}
\caption{}
\end{center}
\end{figure}

\noindent 3.\ \textit{Catapults and language models:} These are additional phenomena where anomalous behaviour is exhibited, possibly repeatedly. The first is the phenomenon wherein the SGD exhibits occasional spikes \cite{Zhu1, Zhu2}. This may be viewed as a repeated occurrence of the ascent as in the first bullet above. The second phenomenon is the training of $n$-gram models of  languages using transformers, where one observes stage-wise decreasing test error analogous to grokking \cite{Varre}.
\begin{figure}[h!]
\begin{center}
\includegraphics[scale=0.7]{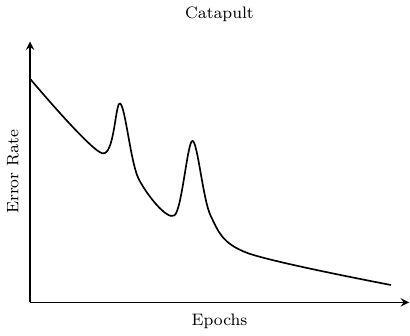}
\caption{}
\end{center}
\end{figure}

The aim of this note is to consider an alternative paradigm for grokking and temporal double descent based on the dynamics of the corresponding SGD as time progresses. The novelty in this work is that we view the iterates as a two time scale stochastic approximation. This two time scale structure is not imposed arbitrarily, but is argued from the well known fact that in very high dimensions, a Lipschitz function will depend mostly on a fraction of the components of the argument, depending only weakly on the rest. This has been made mathematically precise by \cite{Austin}. We recall the key result therein and then use it to motivate a simple model of this phenomenon. As a natural consequence, it is observed that constant step size SGD for such functions naturally splits into two separate, albeit interacting iterations whose effective time scales are separated.\\

Specifically, we consider the loss as a function of the number of iterates of the SGD, equivalently, the number of training samples. The structure of the loss function is exploited as above to justify the time scale separation. This in turn leads to two clearly defined regimes wherein one expects descent due to distinct mechanisms and on different time scales. The two regimes are distinguished by which of the two timescales dominates the evolution of the algorithm, the other one being either too slow or too fast. That leaves an in-between regime where the time scales of the two components are comparable and cannot be separated. This regime is typically not analyzed in the traditional asymptotic analysis. It is argued that this is the regime when  the anomalous behaviour such as ascent or a flat patch  takes place. Some of the aforementioned works already have similar ideas. Our aim here is to give a novel mathematical formulation in the framework of stochastic approximation. \\

%We emphasize here that whatever our claims, they are statements about the mathematical structures at play and are in no sense an evaluation of the relative superiority of one stage over the others. Some of the aforementioned works go in this direction, but that is not our aim here. In particular, this issue could be problem-specific.

%\begin{center}
%\begin{figure}[h!]
%\includegraphics[scale=0.7]{doubledip.pdf}
%\caption{}
%\end{figure}
%\end{center}
%
%\begin{center}
%\begin{figure}[h!]
%\includegraphics[scale=0.7]{grokkingfigure.pdf}
%\caption{}
%\end{figure}
%\end{center}
%
%\begin{center}
%\begin{figure}[h!]
%\includegraphics[scale=0.7]{catapult_pvb_version.pdf}
%\caption{}
%\end{figure}
%\end{center}

The two time scale dynamics is derived in the next section, following which we recall the basics of two time scale stochastic approximation in Section 3. This framework is then used in Section 4 to identify three `regimes' for SGD with a constant stepsize in an over-parametrized framework, leading to an explanation of the grokking and temporal double descent phenomena. This is done assuming unique minimizers wherever the minimization operation occurs, this being the simplest case to analyze and a building block for more general scenarios. Section 5 describes the general case where this assumption is relaxed.  The final section concludes with brief comments.\\

\noindent \textbf{\large NOTATION:} 
\begin{enumerate}
\item We shall denote by:

1.\ $\0$ the vector of all zeros, of appropriate dimension depending on the context, and,

2.\  by $\Gamma$ a matrix of all zeros of appropriate dimensions depending on the context. 

\item We abbreviate `ordinary differential equation' as `ODE'. 

\item For a deterministic or random sequence $\{w_n\}$, $w_n = O(n)$ will stand for $\limsup_{n\to\infty}\frac{|w_n|}{n} < \infty$ (`a.s.' in the random case). If $w_n \neq 0$, we denote by $w_n = \Theta(n)$ the conditions $w_n = O(n)$ \textit{and} $w^{-1}_n = O(n)$ simultaneously  (`a.s.' in the random case).  For vector-valued $\{w_n\}$, $w_n = O(n)$ or $\Theta(n)$ if it is so componentwise. 

\item For Polish spaces\footnote{A Polish space is a separable topological space whose topology is compatible with a complete metric.} $\X, \Y$, we denote by $C(\X), C(\X;\Y)$ the spaces of  continuous functions $\X \to \R$, resp., $\X \to \Y$. $C_b(\X)$ will denote the space of bounded  continuous functions $\X \to \R$. For compact $\X$, continuous real valued functions on $\X$ are always bounded, so the subscript `$b$' may be dropped.

\item $\nabla \ell$ (resp., $\nabla_x \ell, \nabla_y \ell$) will denote:

1.\  the gradient (resp., partial gradient w.r.t.\ $x$, resp., $y$) if $\ell : \R^k \to \R, k \geq 1$, is a differentiable function, and,

2.\ the Jacobian matrix (partial Jacobian matrix w.r.t.\  $x$, resp., $y$) if $\ell : \R^k \to \R^l, k \geq 1, l \geq 2$.

These are viewed as resp., column vectors and matrices of appropriate dimensions.

\end{enumerate}

%{\color{dgreen}
\section{A theorem of Tim Austin}

Our starting point is the oft observed fact that functions of a large number of variables typically depend predominantly on a significantly smaller number of variables. In fact, some rigorous statements along these lines are possible when a Lipschitz condition or a bound on the modulus of continuity is imposed on the function \cite{Austin}. 
We state here an instance of the results of Tim Austin about functions on high dimensional spaces (Theorem 1.3, p.\ \cite{Austin}). Let $(S, d, \mu)$ be a connected and locally connected metric probability space. Define the `canonical' random variables on $S^N, N \geq 1$, endowed with its product Borel $\sigma$-field, by : 
$X_i(\omega) = \omega_i$ for $\omega = [\omega_1, \cdots , \omega_N] \in S^N$. Endow $S^N$ with the metric
$$d_N([x_1,\cdots,x_N],[y_1,\cdots,y_N]) \ := \ \frac{1}{N}\sum_{i=1}^Nd(x_i, y_i).$$
%For an integrable measurable function $f: \R^N \to \R$ and some $B \subset \{1, \cdots , N\}$, define 
%$$E_B[f] := E\left[f(X_1, \cdots , X_N) | X_i, i \in B\right].$$

\begin{theorem}\label{Austin} Let $m : [0,\infty] \to [0,\infty]$  be non-decreasing. Then for every $\ep > 0$, there is some integer $p \geq 1$, depending on $S, \ep$ and $m$, with the following property. For every $N \geq 1$, if $f : S^N\to \R$ has modulus of continuity at most $m$ for the metric $d^N$, then there is $B \subset  \{1, \cdots , N\}$ with $|B|\leq p$ such that
\begin{equation}
\|f - E\left[f(X) | X_i,i\in B \right]\|_1 < \ep \label{AustinBound}
\end{equation}
where $\|\cdots\|_1$ is the $L_1$-norm on $(S,d,\mu)$.
\end{theorem}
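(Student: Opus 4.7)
The plan is to construct the set $B$ greedily by an $L^2$-energy argument on the conditional expectations $f_A := E[f(X)\,|\,X_j, j \in A]$, which are the $L^2$-projections of $f$ onto $\sigma(X_j, j \in A)$. A first step is to reduce to bounded $f$: truncating at level $M = M(\epsilon)$ yields $f_M := (f\wedge M)\vee(-M)$, whose modulus of continuity is still bounded by $m$ and for which $\|f - f_M\|_1 < \epsilon/2$ under the integrability implied by the modulus bound together with the structure of $(S,d,\mu)$. It then suffices to produce $B$ satisfying the analogous bound for $f_M$, which now lies in $L^2$ with $\|f_M\|_2 \leq M$.

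The iteration is the natural one: set $B_0 = \emptyset$ and, at step $k$, choose $i_{k+1} \notin B_k$ maximising $\|f_{B_k \cup \{i\}} - f_{B_k}\|_2^2$. Orthogonality of nested conditional expectations gives the telescoping identity
\[
\|f - f_{B_k}\|_2^2 \;-\; \|f - f_{B_{k+1}}\|_2^2 \;=\; \|f_{B_{k+1}} - f_{B_k}\|_2^2 \;\geq\; 0,
\]
so the energy $\|f - f_{B_k}\|_2^2$ is non-increasing and bounded by $M^2$. If one can secure a uniform positive drop $\delta = \delta(\epsilon, S, \mu, m) > 0$ at every step where $\|f - f_{B_k}\|_1 \geq \epsilon/2$, the algorithm must terminate within $p := \lceil M^2 / \delta\rceil$ steps, a bound depending only on $(S, \epsilon, m)$ and crucially \emph{not} on $N$.

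The main obstacle, and really the heart of the theorem, is producing this uniform drop. The residual $h := f - f_{B_k}$ has conditional mean zero given $\{X_i : i \in B_k\}$ and, since altering only coordinates in $B_k$ moves $d_N$ by at most the same amount, inherits via Jensen's inequality a modulus at most $2m$ on $d_N$. The needed statement is that any such centered $h$ with $\|h\|_1 \geq \epsilon/2$ must concentrate a quantifiable portion of its variance on some single coordinate outside $B_k$. The natural route is an Efron--Stein / martingale-difference decomposition: if every per-coordinate conditional variance were below $\delta$, tensorization would cap $\mathrm{Var}(h)$ by roughly $(N-|B_k|)\delta$, while the modulus condition forces $h$ to vary very slowly under single-coordinate changes, pinning $\mathrm{Var}(h)$ in terms of $m(\mathrm{diam}(S)/N)$. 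The delicate point is that for a general non-decreasing $m$ not assumed to vanish at $0$, this naive quantitative balance can fail, and the required gap must be extracted from the connectedness and local connectedness of $(S, d, \mu)$; this is where the full strength of Austin's machinery — a compactness/ultralimit argument on the sequence of laws of $h$ indexed by $N$ — is essential.

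Combining the truncation error with the greedy termination gives $|B| \leq p$ and $\|f - f_B\|_1 < \epsilon/2 + \epsilon/2 = \epsilon$, yielding the theorem with $p = p(S, \epsilon, m)$ independent of $N$.
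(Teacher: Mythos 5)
First, a point of reference: the paper does not prove this statement at all. It is quoted verbatim as Theorem~1.3 of Austin's paper \cite{Austin} and used as an imported result to motivate the decomposition \eqref{fF}; there is no proof in the paper to compare yours against. So the only question is whether your sketch stands on its own as a proof, and it does not.

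The genuine gap is the one you yourself flag: the uniform energy drop $\delta = \delta(\epsilon, S, \mu, m) > 0$ at every step where $\|f - f_{B_k}\|_1 \geq \epsilon/2$ is the entire content of the theorem, and it is asserted rather than proved. The greedy/energy-increment scaffolding around it (nested conditional expectations, telescoping of $\|f - f_{B_k}\|_2^2$, termination in $\lceil M^2/\delta\rceil$ steps) is routine and correct, but it only converts the hard uniformity claim into an equivalent hard uniformity claim. Your proposed route to the drop is moreover pointed in the wrong direction: Efron--Stein bounds $\mathrm{Var}(h)$ \emph{from above} by the sum of per-coordinate variations, and the modulus hypothesis caps each single-coordinate variation by $m(\mathrm{diam}(S)/N)$, so tensorization gives an upper bound of order $N\, m(\mathrm{diam}(S)/N)^2$ on the variance --- which for a general non-decreasing $m$ (not assumed to vanish at $0$, let alone at any particular rate) yields nothing, and in no case produces a \emph{lower} bound on the best single-coordinate energy gain. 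You acknowledge that ``the naive quantitative balance can fail'' and that the gap ``must be extracted from the connectedness and local connectedness of $(S,d,\mu)$'' via ``the full strength of Austin's machinery''; that is precisely an admission that the key lemma is missing. Without it, nothing rules out the greedy procedure gaining only $o(1)$ energy per step as $N \to \infty$, in which case $p$ would depend on $N$ and the theorem would be empty.

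There is also a secondary problem in the reduction to bounded $f$: a truncation level $M = M(\epsilon)$ independent of $f$ is not available in general. Adding a constant to $f$ leaves $f - E[f\,|\,X_i, i \in B]$ unchanged, so one may normalise $E[f]=0$, but the resulting pointwise bound is $m(\mathrm{diam}(S))$, which need not be finite (neither $\mathrm{diam}(S)$ nor $m$ at that value is assumed finite). Since your final bound is $p = \lceil M^2/\delta\rceil$, any dependence of $M$ on $f$ propagates into $p$ and breaks the required uniformity over all $f$ and all $N$. This is repairable with more care, but as written it is a second place where the claimed independence of $p$ from everything but $(S,\epsilon,m)$ is not established.
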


%\begin{corollary} Let $D :=$ a compact convex set in $\R^d, d \geq 1,$ with nonempty interior and smooth boundary, and let $f$ above be continuously differentiable. Then we can replace the $L_1$-norm in \eqref{AustinBound}  by the sup norm.
%\end{corollary}
%
%\begin{proof} This is an immediate consequence of the Sobolev embedding theorem \cite{Adams}. Specifically, let $W^{1,p}(D) :=$ the space of functions differentiable in the sense of distributions for which the $p$-th moment, $p\geq 1$, of the function and its distributional partial derivatives is bounded. It is endowed with the norm
%$$\|f\|_{1,p} : = \left(\int|f|^pd\mu\right)^{\frac{1}{p}} + \sum_i\left(\int\left|\frac{\partial f}{\partial x_i}\right|^pd\mu\right)^{\frac{1}{p}}, \ f \in W^{1,p}(D).$$
%When $p > N$, this space is continuously embedded in  $C(D)$ equipped with the max norm $\| \ \cdot \ \|$, whence
%$$\|f\|_{1,p} \leq c\|f\|$$
%for some constant $c > 0$. Specializing these observations to continuously differentiable functions, which are contained in $W^{1,p}(D)$ for all $p \geq 1$, the claim follows for this class of functions. \end{proof}

This encapsulates the well known  observation that a function of a very large number of variables will typically depend predominantly on a significantly smaller subset thereof. In view of these observations, our starting point will be a stylized model for stochastic gradient descent (SGD) in high dimensions that one encounters while training large over-parametrized neural networks. Consider a function $F: \R^N \to \R$ for some $N \gg 1$, which will serve as our surrogate for the map that maps the parameters to the expected loss  for an over-parametrized neural network. Since our interest is in analyzing SGD for this function, we shall assume that it is continuously differentiable.

In view of the foregoing, we assume that $F$ is of the form 
\begin{equation}
F(z) = f(x,\ep y) \ \mbox{for} \ x \in \R^d, y \in \R^s, \label{fF}
\end{equation}
for some $d,s\gg 1$ with $d+s=N$, $z = [x^T : y^T]^T$, and $0 < \ep \ll 1$. This captures the fact that $F$ depends only weakly on the variable $y$. 

\begin{remark} Note that in reality the separation of scales of dependence may not be binary as depicted here. There may be multiple, even a continuum of degrees of dependence. Our aim is only to formulate  a stylized model that presents a simple scenario wherein the temporal anomalies such as grokking can be explained. We shall comment again on this later in this article.\\
Nor do we assign the three (or more) regimes we identify any relative importance in the overall scheme, as this  can depend on the specifics of the problem. For example, if only the strongly dependent variables $x$ matter and the $y$ variables are deemed `spurious', then only the first descent is important and one might consider `early stopping' as in \cite{Nakkiran}. We do not get into these issues here. \end{remark}

\section{High dimensional SGD}

We begin with the derivation of the two time scale SGD model. We have
$$\nabla F(z) = [\nabla_xf(x, \ep y)^T : \ep\nabla_2f(x, \ep y)^T]^T.$$
The SGD iteration for $F$ with a small constant step size $a \in (0,1)$ is given by
\begin{equation}
z(n+1) = z(n) + a\left(-\nabla F(z(n)) + M(n+1)\right), \ n \geq 0. \label{basicSGD}
\end{equation}
Here $M(n), n \geq 1,$ is the standard `\textit{martingale difference noise}',  i.e., a sequence of integrable random variables in $\R^N$ satisfying
$$E\left[M(n+1)|\F_n\right] = \0, \ n \geq 0,$$
where  $\F_n :=$ the $\sigma$-field $\sigma(M(m), 1 \leq m \leq n;  x(0))$ for $n \geq 0$. Partition $M(n) $ as $[M_1(n)^T : M_2(n)^T]^T$ for $n \geq 1$, where $M_1(n), M_2(n)$ denote resp.\ the first $d$ and the last $s$ components of $M(n)$. 
Let $b := \ep a$ where $\ep > 0$ is as above. Observe that since $a < 1$ and $\ep \ll 1$, 
\begin{equation}
b \ \ll \ a \ < \ \sqrt{a}. \label{eps}
\end{equation} 
The iteration \eqref{basicSGD} can be rewritten as the coupled iteration
\begin{eqnarray}
\lefteqn{x(n+1) = x(n) \ + } \nonumber \\
&& \ \ \ \ \ \ \ \ \ \   a\left(-\nabla_1f(x(n),  \ep y(n)) + M_1(n+1)\right), \label{fast} \\
\lefteqn{y(n+1) = y(n) \ + } \nonumber \\
&& \ \ \ \ \ \ \ \ \ \  b\left(-\nabla_yf(x(n),  \ep y(n)) + M_2(n+1)\right). \label{slow}
\end{eqnarray}
Since $a \gg b$, this is recognized as a constant step size counterpart of  two time scale stochastic approximation introduced in \cite{Borkar2T} for decreasing step sizes. See Section 9.4, bullet 4, of \cite{BorkarBook} for an extension to constant step sizes. In the next section, we apply this framework to our problem.\\

\begin{remark} It is important to note here that we are in fact using a common step size $a$. The two time scale structure emerges due to the structure of $F$ which, on differentiation, multiplies the step size of the second iteration for $\{y(n)\}$  by $\ep \ll 1$, causing the effective step size to be $\ep a = b \ll a$. This induces a time scale separation. \end{remark}

\medskip

\section{Two time scale iterations}

The above observation  facilitates analysis of the SGD in \eqref{fast}-\eqref{slow} as a two time scale stochastic approximation, with the difference (as opposed to \cite{Borkar2T}) being that we have constant step sizes $a \gg b > 0$.  In the classical theory of \cite{Borkar2T}, $a,b$ are replaced by decreasing step sizes $a(n), b(n), n \geq 0,$ satisfying the Robbins-Monro conditions $\sum_na(n) = \sum_nb(n) = \infty$ and $\sum a(n)^2, \sum_n b(n)^2 < \infty$,  along with  the additional condition $b(n) = o(a(n))$ as $n\to\infty$.  Treating $a(n)$ as a discrete time step, define continuous piecewise linear interpolations of $\{x(n)\}, \{y(n)\}$ as follows : Let $t(0) = 0$, $t(n+1) = t(n) + a(n)$ for $n \geq 0$. Set $\bar{x}(t(n)) = x(n), \bar{y}(t(n)) = y(n)$ with linear interpolation on $[t(n), t(n+1)], n \geq 0$.  As argued in \cite{Borkar2T} (also \cite{BorkarBook}, Section 8.1), one can show that 
$$\lim_{t_0\to\infty}\sup_{t\in[t_0,t_0+T]}\|(\bar{x}(t), \bar{y}(t)) - (\tilde{x}(t), \tilde{y}(t))\| \to 0 \ \ \mbox{a.s.}$$ where $\tilde{x}(\cdot), \tilde{y}(\cdot)$ satisfy the ODEs
$$\dot{x}(t) = -\nabla_xf(x(t),y(t)), \ \dot{y}(t) = \0, \ t \in [t_0, t_0+T],$$
with $\tilde{x}(t_0) = \bar{x}(t_0), \tilde{y}(t_0) = \bar{y}(t_0)$. This ensures the separation of time scales of \eqref{fast} and \eqref{slow}. 
One can then rigorously argue as in \cite{Borkar2T} or Section 8.1 of \cite{BorkarBook} that the $\{x(n)\}$ iterates track the asymptotic behaviour of the ODE
\begin{equation}
\dot{x}(t) = -\nabla_xf(x(t),  \ep y) \label{fastODE}
\end{equation}
for a slowly varying $y \approx y(t)$. The latter can be taken to be a constant with asymptotically negligible error over the moving time window $[t_0,t_0+T]$, as $t_0\to\infty$. Suppose that for each $y$, $f(\cdot,  \ep y)$ has a unique minimum $\lambda(y)$ that is Lipschitz in $y$. (We relax this assumption in Section \ref{Grooves}.)

\begin{remark} One way this Lipschitz condition can arise is as follows. Suppose that $f(x,\epsilon y)$ is twice continuously differentiable with bounded second partial derivatives. At $x = \lambda(y)$, one has $\nabla_xf(\lambda(y),y) = \0$. Also, $\nabla^2_xf(\lambda(y),y)$ is positive semidefinite. In fact, if $\lambda(y)$ is an isolated minimum, it is generically true that it will be positive definite. (`Generic' in the sense that a small perturbation of the  function will render it positive definite.) We assume it to be positive definite. Then by the implicit function theorem (\cite{Spivak}, pp.\ 41-42), $\lambda(y)$ is continuously differentiable. Furthermore, differentiating the equation $\nabla_xf(\lambda(y),y) = \0$ on both sides  with respect to $y$, we have
$$\nabla_x^2f(\lambda(y),y)\nabla\lambda(y) + \nabla^2_{yx}f(\lambda(y),y) = \Gamma,$$
where $\nabla^2_{yx} = \nabla_{y}\nabla_x.$
Then
$$\nabla\lambda(y) = -\left(\nabla_x^2f(\lambda(y),y)\right)^{-1}\nabla_{yx}^2f(\lambda(y),y),$$
which, under our hypotheses, is bounded in norm. This implies Lipschitz property for $\lambda(\cdot)$ by the mean value theorem. \end{remark}

\medskip

As argued in \cite{BorkarBook}, Section 8.1, $x(n) - \lambda(y(n)) \to 0$ a.s. Suppose, as in \cite{BorkarBook}, Section 8.1,  that the  ODE
\begin{equation}
\dot{y}(t) = -\nabla_yf(\lambda(y(t)),  \ep y(t)) \label{slowODE}
\end{equation}
has a unique globally asymptotically stable equilibrium $y^*$. (This condition will be relaxed in Section \ref{Grooves}.) Then it is shown in \cite{BorkarBook}, Section 8.1,  that 
$\{y(n)\}$ track the asymptotic behaviour of the ODE 
\begin{equation}
\dot{y}(t) = -\nabla_yf(\lambda(y(t)), \ep y(t)) \label{slowODE2}
\end{equation}
and
\begin{equation}
(x(n),y(n)) \to (\lambda(y^*), y^*) \ \mbox{a.s.} \label{convergence}
\end{equation}

\medskip

\begin{remark} In \cite{Borkar2T},  \cite{BorkarBook}, a more general two time scale dynamics is considered, wherein the driving vector fields on both time scales are not necessarily negative gradients.  We have specialized the results therein to the case where on both time scales, it is a gradient descent in the appropriate variable.\end{remark}

The problem of proving that  \eqref{slowODE} converges can be handled as follows. Note that
$$\nabla_yf(\lambda(y), \ep y) = \nabla_y\min_xf(x, \ep y)$$
by the envelope theorem, also known as Danskin's theorem \cite{Danskin}.  (See \cite{Guler} or \cite{Bardi}, pp.\ 42-46 for a modern treatment.) Thus   \eqref{slowODE} is also a gradient descent and will converge to the unique minimum $y^*$ in view of our assumptions. We state this as a lemma.\\

\begin{lemma} As $t\to\infty$, $(x(t),y(t)) \to$ the unique minimum of $F$. \end{lemma}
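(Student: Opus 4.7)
The plan is to reduce the slow ODE to a pure gradient flow on the value function $g(y) := \min_x f(x,\epsilon y)$, identify the limit of the two time scale dynamics with its unique minimizer, and then check that this minimizer of $g$ lifts to the unique minimizer of $F$. The envelope/Danskin theorem, already cited just before the lemma, yields
\[
\nabla g(y) \;=\; \nabla_y f(\lambda(y),\epsilon y),
\]
so \eqref{slowODE} becomes $\dot y(t) = -\nabla g(y(t))$, a bona fide gradient descent on $g$. Under the standing assumption that this ODE has a unique globally asymptotically stable equilibrium $y^*$, and that $\lambda$ is Lipschitz (hence $g$ continuously differentiable by the Remark above), $y^*$ must be the unique critical point and therefore the unique global minimizer of $g$.

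Next, I would combine this with the fast-scale tracking result $x(n)-\lambda(y(n))\to\0$ a.s.\ and the two time scale convergence \eqref{convergence} from \cite{Borkar2T} and \cite{BorkarBook}, Section~8.1, obtaining
\[
(x(t),y(t)) \;\longrightarrow\; (\lambda(y^*),y^*) \quad \text{a.s.}
\]
This is essentially already in hand from the preceding paragraph; only the identification of the limit with $\arg\min F$ remains.

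For that identification, I would argue directly: for any $(x,y)\in\R^d\times\R^s$,
\[
F(x,y) \;=\; f(x,\epsilon y) \;\geq\; \min_{x'} f(x',\epsilon y) \;=\; g(y) \;\geq\; g(y^*) \;=\; F(\lambda(y^*),y^*),
\]
where the first inequality is strict unless $x=\lambda(y)$, by uniqueness of the inner minimizer, and the second is strict unless $y=y^*$, by uniqueness of the outer minimizer. Hence $(\lambda(y^*),y^*)$ is the unique global minimum of $F$, completing the proof.

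The main obstacle, to the extent there is one, is not the short chain of inequalities above but making sure the three ingredients (two time scale tracking, Lipschitz/$C^1$ regularity of $\lambda$ needed to invoke Danskin and differentiate $g$, and the uniqueness/global-stability of $y^*$ for the reduced flow) are simultaneously in force. In particular, "unique globally asymptotically stable equilibrium of the gradient flow of $g$" has to be read as equivalent to $g$ having a unique critical point that is its global minimum; once that is accepted, the lemma follows from assembling the pieces already developed in the text.
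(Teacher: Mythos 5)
Your proposal is correct and follows essentially the same route as the paper: invoke Danskin's theorem to recognize \eqref{slowODE} as a gradient descent on $g(y)=\min_x f(x,\epsilon y)$, use the standing uniqueness/global-stability and tracking assumptions to conclude convergence to $(\lambda(y^*),y^*)$, and identify that point with the minimum of $F$. Your explicit chain of inequalities identifying $(\lambda(y^*),y^*)$ as the unique global minimizer of $F$, and your caveat that ``unique globally asymptotically stable equilibrium'' must be read as ``unique critical point which is the global minimum of $g$,'' merely make precise steps the paper leaves implicit.
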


\begin{remark} The above version of Danskin's theorem requires strong conditions to ensure that $y \mapsto \min_x f(x, \epsilon y)$ is differentiable. More generally, one has a subgradient instead of a gradient on the right hand side, but the above conclusion continues to hold. We stay with the gradient for simplicity. \end{remark}

\medskip

We shall now state the corresponding results for two timescale stochastic approximation with constant step sizes, where one can only expect concentration near the desired limit and not a.s.\ convergence to it.  This is captured by  the results of \cite{BorkarBook}, Section 9.4, bullet 4. In our context, they translate into the following.\\

\begin{theorem} For $x(n), y(n), n \geq 0,$ as above,
\begin{multline*}
	\limsup_{n\uparrow\infty}\left(E\left[\|x(n) - \lambda(y^*)\|^2\right] + E\left[\|y(n) - y^*\|^2\right]\right)  \\ 
	 {} = O(a) + O\left(\frac{b}{a}\right) = O(a) + O(\ep).
\end{multline*}
\end{theorem}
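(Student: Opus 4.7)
The plan is to follow the constant-step-size adaptation of the two-timescale stochastic approximation analysis of \cite{BorkarBook}, Section 9.4, specialized to the gradient descent setting at hand. A preliminary triangle inequality together with the Lipschitz property assumed for $\lambda$ gives
$$\|x(n)-\lambda(y^*)\|^2 \le 2\|e(n)\|^2 + 2L^2\|y(n)-y^*\|^2,$$
where $e(n) := x(n) - \lambda(y(n))$ and $L$ is the Lipschitz constant of $\lambda$, so it suffices to show that $\limsup_n E[\|e(n)\|^2]$ and $\limsup_n E[\|y(n)-y^*\|^2]$ are each $O(a) + O(b/a)$.

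For the fast tracking error, I would set up a one-step mean-square recursion for $\|e(n)\|^2$. Expanding
$$e(n+1) = e(n) - a\nabla_xf(x(n),\ep y(n)) + aM_1(n+1) - \bigl(\lambda(y(n+1)) - \lambda(y(n))\bigr),$$
using the positive-definiteness of $\nabla^2_x f$ at $\lambda(y(n))$ to extract a contraction factor $(1-ca)$, and using the first-order expansion $\lambda(y(n+1)) - \lambda(y(n)) \approx b\,\nabla\lambda(y(n))\cdot(-\nabla_y f + M_2(n+1))$ together with Cauchy-Schwarz (weighted by $a$) on the resulting cross terms, one obtains a recursion of the schematic form
$$E\bigl[\|e(n+1)\|^2\mid\F_n\bigr] \le (1-ca)\|e(n)\|^2 + C_1 a^2 + C_2 \frac{b^2}{a}.$$
Iterating yields $\limsup_n E[\|e(n)\|^2] = O(a) + O(b/a)$: the $O(a)$ reflects the asymptotic variance of the fast martingale increment $aM_1$ balanced against the contraction $ca$, while the $O(b/a)$ absorbs both the deterministic leakage of the slow update (drift bias $O(b)$ against contraction $O(a)$) and the slow martingale variance $b^2$ injected into the $e$-dynamics and then divided by the contraction rate.

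For the slow iterate, I would rewrite the update as
$$y(n+1) = y(n) - b\,\nabla_yf(\lambda(y(n)),\ep y(n)) + bM_2(n+1) + b\,\eta(n),$$
where $\eta(n) := \nabla_yf(\lambda(y(n)),\ep y(n)) - \nabla_yf(x(n),\ep y(n))$ satisfies $\|\eta(n)\| \le L'\|e(n)\|$ by Lipschitz continuity of $\nabla_y f$. Under the global asymptotic stability of $y^*$ for \eqref{slowODE}, a Lyapunov function $W$ with quadratic behaviour near $y^*$ and strict decrease along the ODE is available; in this gradient-descent setting $W(y) = \min_x f(x,\ep y) - \min F$ is a natural choice via the Danskin identity invoked earlier. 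A one-step analysis of $E[W(y(n+1))\mid\F_n]$ analogous to the fast step, combined with the bound on $E[\|e(n)\|^2]$, yields $\limsup_n E[\|y(n)-y^*\|^2] = O(b) + O(E[\|e(n)\|^2]) = O(a)+O(b/a)$.

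The main obstacle, in my view, is establishing the fast-iterate bound in the stated $O(a)+O(b/a)$ form: the slow update enters the $e$-recursion as a genuine bias that does not vanish even when $y(n)$ concentrates near $y^*$, because of the constant-step-size fluctuations of the slow iterate, and telescoping this bias carefully against the $(1-ca)$ contraction is exactly the step that distinguishes the constant-step setting from the Robbins--Monro regime of \cite{Borkar2T}. Once this is handled, the slow iterate analysis reduces to a standard Lyapunov argument for perturbed gradient descent under a small additive perturbation whose mean-square is already controlled.
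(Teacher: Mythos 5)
The paper offers no proof of this theorem beyond the citation ``See \cite{BorkarBook}, Section 9.4, for details,'' and your outline is a faithful reconstruction of exactly that standard constant-stepsize two-timescale mean-square analysis: splitting off the tracking error $e(n)=x(n)-\lambda(y(n))$, extracting a $(1-ca)$ contraction from the positive-definite Hessian, balancing the $a^2$ noise variance and the $O(b)$ slow-leakage bias against it (noting $b^2/a^2 = \ep^2 = O(\ep) = O(b/a)$), and then running a Lyapunov recursion for the slow iterate with the perturbation controlled by $E[\|e(n)\|^2]$. Your approach is essentially the same as the cited one, and your closing remark correctly identifies the telescoping of the slow-update bias against the fast contraction as the step that distinguishes the constant-step case from the Robbins--Monro setting of \cite{Borkar2T}.
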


\medskip

See \cite{BorkarBook}, Section 9.4, for details.
%Extensions of the results of \cite{Borkar2T} have been briefly described in \cite{BorkarBook}, Chapter 9.\\

\section{The anomalous behaviour}

This classical asymptotic analysis does not explain the anomalous behaviour. For that, we have to dig into the temporal behaviour of the SGD in greater detail, by identifying different regimes in its evolution. We do this next.\\

\begin{enumerate}

\item \textit{The initial regime:} In the initial phase of the iterations, we expect the partial gradients $\nabla_xf(x(n),\ep y(n))$ and $\nabla_yf(x(n),\ep y(n))$ to be comparable and non-negligible, both $x, y$ being away from the corresponding componentwise minimum. The step sizes are constant at $a \gg b > 0$. Thus the first part of the two time scale logic described above indicates that $y(n)$ will hardly change whereas $x(n)$ will quickly start approaching $\lambda(y(n))$. This is the first descent.\\

\item \textit{The terminal regime:} In the third and final phase, $x(n) \approx \lambda(y(n)) =  argmin(f( \cdot , y(n)))$ and hence $\nabla_xf(x(n),y(n)) \approx 0$. But $\nabla_yf(x(n),y(n)) \approx \nabla_y\min_xf(x,y(n))$ remains significant. Hence $y(n)$ will perform the SGD (more generally, stochastic subgradient descent) on 
$$f(\lambda(y),\ep y) = \min_xf(x,\ep y)$$ 
so as to approach $y^*$. The $x(n)$ will correspondingly track $\lambda(y(n))$ (i.e., $x(n) - \lambda(y(n)) \to 0$ a.s.) and approach $\lambda(y^*)$. This is the second and final descent.\\

The above statements amount to the claims from \cite{Borkar2T}, \cite{BorkarBook} specialized to SGD. \\
 
\item \textit{The `middle' regime:} This leaves the in-between regime when the quantities $a\nabla_xf(x(n), \ep y(n))$ and $b\nabla_yf(x(n), \ep y(n))$ are of comparable magnitude and the time scales of descent in the $x$ and $y$ variables cannot be separated. This is a non-asymptotic and hence commonly ignored regime in the theory of two time scale stochastic approximation.
Consider the ODE \eqref{slowODE} 
which  is the dynamics eventually being tracked by the $\{y(n)\}$ iterations in the terminal phase. These iterations in turn are given by
\begin{eqnarray}
\lefteqn{y(n+1) = y(n) - b\Big(\nabla_yf(x(n), \epsilon y(n)) + M_2(n+1)\Big)} \nonumber \\
&=& y(n) - b\Big(\nabla_yf(\lambda(y(n)), \epsilon y(n)) + \nonumber \\
&& [-\nabla_yf(\lambda(y(n)), \epsilon y(n))) + \nabla_yf(x(n), \epsilon y(n))] \nonumber\\
&& + \ M_2(n+1)\Big). \label{spliteq}
\end{eqnarray}
We shall consider this iteration in the middle regime.

\begin{lemma}\label{wedge} Suppose $\nabla^2f$ is uniformly well conditioned in all directions in a neighbourhood of the trajectory of \eqref{slowODE}. Then the landscape in the neighbourhood of the trajectory of  \eqref{slowODE} is `pinched' in the sense that it is slowly varying along the trajectory of \eqref{slowODE}, but rapidly varying in the $x$-direction, with a local minimum in $x$ at $\lambda(y)$. \end{lemma}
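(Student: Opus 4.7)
The plan is to exploit the factorization $F(z)=f(x,\ep y)$: each derivative in the slow variable $y$ pulls out a factor $\ep$ via the chain rule, making $\nabla^2 F$ highly anisotropic even though $\nabla^2 f$ itself is uniformly well conditioned. Concretely, I would first record the identities
\begin{equation*}
\nabla_x F = \nabla_x f(x,\ep y), \qquad \nabla_y F = \ep\, \nabla_2 f(x,\ep y),
\end{equation*}
\begin{equation*}
\nabla^2_{xx}F = \nabla^2_{xx}f, \qquad \nabla^2_{yy}F = \ep^2\, \nabla^2_{22}f,
\end{equation*}
where the subscript $2$ marks the second slot of $f$. Thus the $yy$-block of $\nabla^2 F$ is suppressed by $\ep^2$ relative to the $xx$-block, which is what ultimately drives the pinching.

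Two short calculations then pin down the two directions. First, I would lift the trajectory of \eqref{slowODE} into $(x,y)$-space as $t\mapsto(\lambda(y(t)),y(t))$. Since $\nabla_x f(\lambda(y),\ep y)=\0$ by definition of $\lambda$, the chain rule gives
\begin{equation*}
\frac{d}{dt}F(\lambda(y(t)),y(t)) = \nabla_y F\cdot\dot y(t) = \ep\, \nabla_2 f(\lambda,\ep y)\cdot\dot y(t) = O(\ep),
\end{equation*}
since $\|\dot y\|=\|\nabla_y f(\lambda,\ep y)\|=O(1)$ on the trajectory. So $F$ drops only at rate $O(\ep)$ along the lifted slow orbit. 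Second, for fixed $y$ I would Taylor-expand $F(\cdot,y)$ around $\lambda(y)$: well-conditioning of $\nabla^2 f$ forces $\nabla^2_{xx}f(\lambda(y),\ep y)$ to be positive definite with eigenvalues in some interval $[c,C]\subset(0,\infty)$ uniformly along the trajectory, so combined with $\nabla_x F(\lambda(y),y)=\0$ one gets
\begin{equation*}
\tfrac{c}{2}\|\delta\|^2 \le F(\lambda(y)+\delta,y)-F(\lambda(y),y) \le \tfrac{C}{2}\|\delta\|^2
\end{equation*}
for all small $\delta$, exhibiting $\lambda(y)$ as a strict local minimum of $F(\cdot,y)$ with $\Theta(1)$ transverse curvature.

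The main obstacle is not the calculation, which is routine chain rule, but rather making the word \emph{pinched} precise: I would need to pinpoint that the slowly and rapidly varying aspects concern $F$ along versus transverse to the \emph{lifted} slow trajectory, and to quantify them by the two displays above. The well-conditioning hypothesis does the heavy lifting, certifying that the transverse curvature stays bounded away from both zero and infinity uniformly in $y$. Juxtaposing the two estimates then yields the asserted pinched landscape: tangential rate $O(\ep)$ versus transverse curvature $\Theta(1)$, dwarfing each other because $\ep\ll 1$.
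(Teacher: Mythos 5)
Your proposal is correct and rests on the same mechanism as the paper's own (much terser) proof: the $\ep$-scaling of the second argument makes $\nabla^2 F$ badly conditioned even though $\nabla^2 f$ is uniformly well conditioned, so variation along the slow trajectory is $\Theta(\ep)$ relative to the $\Theta(1)$ transverse curvature at the minimizer $\lambda(y)$. Your version simply makes explicit, via the chain-rule identities and the Taylor expansion in $\delta$, what the paper asserts in one line.
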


\begin{proof}  Because $\lambda(y)$ minimizes $f( \cdot , y)$, we have $f(x,y) \geq f(\lambda(y),Y)$ for $x$ in a neighbourhood of $\lambda(y)$ for fixed $y$.
Also, since $\nabla^2f$ is uniformly well-conditioned in $B$, it follows that $\nabla^2f( \cdot, \ep \times \  \cdot)$ will \textit{not} be so. In particular, the variation along the $y$ direction is $\Theta(\ep)$ times slower than the variation in the $x$ direction at any $y$ on the trajectory of \eqref{slowODE} restricted to $b$.
This completes the proof. \end{proof}

%\begin{remark} If $\nabla^2_xf( x , y )$ is positive definite at $x = \lambda(y)$, $f(\cdot,y)$ will be strictly convex in a small neighbourhood of$(\lambda(y),Y)$ in the $x$ direction. However, we do not need convexity. See Remark 2.3, page 10, of \cite{Cristoferi} to see how bad the function can be near a local minimum in absence of rea. \end{remark}

\medskip

Consider the approximation errors between $y(t)$ and the piecewise linear interpolation of $\{y(n)\}$ on an interval $[n_0b, n_0b + T]$ for $t_0 = n_0b$, with $n_0 \geq 0$ and $T = Mb$ for some $M \gg 0$. By standard arguments (see, e.g., Chapter 2, \cite{BorkarBook}), the max norm error between the linear interpolation and the ODE trajectory with a common initial condition  has a contribution from the discretization error that is $O(bT)$. (Without going into the details, it is $O(b^2)$ per iterate and summed over $M$ steps, it is $O(b^2M) = O(bT)$.) The error $\zeta^0_k, n_0 \leq k < n_0 +M,$ due to the martingale noise grows as $O\left(b\sqrt{M\log\log(b^2M)}\right)$ by the law of iterated logarithm for martingales \cite{Heyde}. 
In view of the results of Section 9.2 of \cite{BorkarBook}, we have the mean square error
\begin{equation}
E\left[\|x(n) - \lambda(y(n))\|^2\right]^{\frac{1}{2}} = \sqrt{O(a) + O(\epsilon)} = O(\sqrt{a}). \label{rogue1}
\end{equation}
Since it is weighted by $b$ in each step, the total error is 
$O(b\sqrt{a}M) = O(\sqrt{a}T)$.
Now consider a small neighbourhood $B$ of a point $y(t^*)$ on the trajectory of \eqref{slowODE} and a small interval $[n_0b, n_0b + T]$ as above for a small $T = Mb$. Since this $\Theta(\sqrt{a})$ error is added at each step of \eqref{spliteq} and gets weighted by $b$, the total error over this interval will be $O(\sqrt{a}T)$. Figure 4 shows a schematic for the behaviour of the iteration \eqref{spliteq}, with the dotted line showing a small piece of $y(\cdot)$ satisfying (4.2) and the dark lines the $\{y(n)\}$ given by \eqref{spliteq}.

\begin{figure}[h!]
\begin{center}
\includegraphics[width=3in]{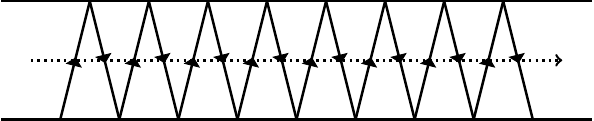}
\caption{}
\end{center}
\end{figure}

That said, Figure 4 is a top down view of the pinched `valley' as viewed from the top and suppresses the vertical dimension. The width of the valley will shrink as the iterates go down to the trajectory of $y(\cdot)$ depicted by the dashed line. The $x$ component, in view of the foregoing, contributes the $\Theta(\sqrt{a})$ jitter (probabilistically speaking) while moving downwards in the valley. As it does so, the valley shrinks in the $x$ direction and eventually its contribution to the error is negligible, which corresponds to the terminal regime. It is the initial stretch, when it is not negligible, 
that we identify with the middle regime. It is clear that the $\Theta(\sqrt{a})$ jitter contributed by the $x$ component, when not negligible, will make a $\Theta(\sqrt{a})$ contribution to the root mean square error, which is significantly larger than the $\approx \Theta(b)$ error contributed by other sources of error in view of \eqref{eps}. This we believe to be the root cause of the presence of the middle region.
\end{enumerate}

\medskip

\begin{remark} One catch in the foregoing argument is that in \eqref{rogue1}, the error is \textit{mean square}, not pathwise. This is one reason we cannot make the above into a `theorem'. This and similar issues (such as the individual roles of the many parameters involved) explain  the wide variation in the occurrence vs non-occurrence of the middle region and the wide variation in its  nature and duration. \end{remark}

This gives an explanation for the anomalous phenomenon for SGD, taking the viewpoint that SGD, like any stochastic approximation, is a noisy discretization of a continuous ODE \cite{BorkarBook}. 

\section{The general case}\label{Grooves}

In this section, we discuss the situation when there are multiple local minima for $f( \cdot , \ep y)$ and / or for $f(\lambda(\cdot), \ep \times \cdot )$, where $\lambda(y)$ is a minimizer of $f( \cdot , \ep y)$. We shall make the additional technical assumption:\\

\noindent \textbf{$(\dagger)$} The map $f$ is twice continuously differentiable and  $f( \cdot , \ep y)$ are real analytic near their local minima.\\

\begin{remark} This excludes connected sets of local minima, the so called `flat' minima, because, e.g.,  a function $\kappa: \R \to \R$ cannot be zero for $x < x^*$ (say) and $=0$ for $x \in [x^*, x^*+\delta)$ for some $\delta > 0$ while being real analytic at $x^*$. Nevertheless, such a situation is non-generic in the sense that it is not stable with respect to small perturbations. The `flat' patches one sees are rarely exactly flat. \end{remark}

In view of the results of \cite{Absil}, this has the following important consequence.\\

\begin{theorem} Every isolated local minimum of $f( \cdot, \ep y)$ is a stable equilibrium of the ODE $\dot{x}(t) = -\nabla_xf(x(t),\ep y)$. \end{theorem}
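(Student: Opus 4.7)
The plan is to use the loss itself as a strict Lyapunov function, and then to invoke the \L ojasiewicz inequality (which is the engine behind the Absil--Mahony--Andrews convergence result cited as \cite{Absil}) in order to upgrade Lyapunov stability to asymptotic stability. Throughout, $y$ is held fixed and treated as a parameter.

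First, I would verify that $x^*$ is an equilibrium. Since $f$ is $C^1$ in $x$ and $x^*$ is a local minimum of $f(\cdot,\ep y)$, Fermat's condition gives $\nabla_x f(x^*, \ep y) = \0$, so the vector field on the right hand side of the ODE vanishes at $x^*$.

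Second, I would set up the Lyapunov function. Fix a ball $B$ around $x^*$ in which $x^*$ is the only local minimum, taken small enough that $f(x,\ep y) \geq f(x^*,\ep y)$ on $B$. Define $V(x) := f(x,\ep y) - f(x^*,\ep y)$. Then $V \geq 0$ on $B$ and $V > 0$ on $B \setminus \{x^*\}$: if $V(x_0)=0$ for some $x_0 \in B \setminus \{x^*\}$, then $x_0$ would itself be a local minimum of $f(\cdot,\ep y)$ (since $V \geq 0$ in its neighbourhood), contradicting isolation. Along the ODE,
$$\dot V(x(t)) = \langle \nabla_x f(x(t),\ep y), \dot x(t)\rangle = -\|\nabla_x f(x(t),\ep y)\|^2 \leq 0.$$
The classical Lyapunov theorem then gives Lyapunov stability: for every neighbourhood $U \subseteq B$ of $x^*$ there is a smaller neighbourhood $W$ such that trajectories starting in $W$ remain in $U$ for all $t \geq 0$.

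Third, to obtain asymptotic stability (which I take to be the operative sense of ``stable'' here, given the analyticity hypothesis $(\dagger)$ and the invocation of \cite{Absil}), I would apply the \L ojasiewicz inequality for the real-analytic function $V$ near $x^*$: after possibly shrinking $B$, there exist $C > 0$ and $\theta \in (0, 1/2]$ with $\|\nabla V(x)\| \geq C\, V(x)^{1-\theta}$ on $B$. The standard computation with $\phi(t) := V(x(t))^\theta$ then gives $\dot\phi \leq -\theta C\,\|\dot x(t)\|$, so any forward trajectory trapped in $B$ satisfies $\int_0^\infty \|\dot x(t)\|\,dt < \infty$ and therefore converges to a unique limit $x_\infty \in \overline{B}$. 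The limit is a critical point of $f(\cdot,\ep y)$, and the Lyapunov basin constructed in the second step together with isolation forces $x_\infty = x^*$. This is the ODE counterpart of the iterate convergence theorem of \cite{Absil} and yields asymptotic stability.

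The step I expect to be the main obstacle is the identification of the limit $x_\infty$ with $x^*$: isolation of $x^*$ as a local \emph{minimum} rules out nearby local minima, but \emph{a priori} other critical points (saddles) of $f(\cdot,\ep y)$ could accumulate near $x^*$. Real-analyticity is exactly what is needed here, both through the \L ojasiewicz inequality and through the fact that the critical set of a real-analytic function does not accumulate at an isolated local minimum in any pathological way. This is the delicate point handled by the \L ojasiewicz-based arguments in \cite{Absil}, which is precisely why hypothesis $(\dagger)$ is imposed.
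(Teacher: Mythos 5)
The paper offers no proof of this theorem beyond the citation: it is stated as a direct consequence of Absil and Kurdyka \cite{Absil}, whose hypothesis (a {\L}ojasiewicz-type inequality, supplied here by the real-analyticity in $(\dagger)$) is exactly what your argument invokes, so your proposal is a correct reconstruction of the cited result in the isolated case rather than a genuinely different route. Two points are worth recording. First, for the Lyapunov stability actually asserted in \cite{Absil}, your second step alone suffices and needs no analyticity at all: isolation makes the minimum strict, so $V$ is positive definite, $\dot V \leq 0$, and the classical Lyapunov theorem applies; the {\L}ojasiewicz machinery is essential in \cite{Absil} only because they also treat non-strict local minima, which this theorem excludes. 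Second, the step you rightly flag as delicate --- that the limit $x_\infty$ could a priori be a saddle accumulating at $x^*$ --- closes cleanly: the {\L}ojasiewicz inequality $\|\nabla V(x)\| \geq C\,V(x)^{1-\theta}$ forces any critical point $x_0$ near $x^*$ to satisfy $V(x_0)=0$, hence to be a local minimum of $f(\cdot,\ep y)$ (since $V \geq 0$ on $B$), which isolation forbids; so $x^*$ is an isolated critical point, the Lyapunov neighbourhood can be shrunk to contain no other critical point, and $x_\infty = x^*$. With that observation your argument is complete and in fact delivers asymptotic stability, which is stronger than the (Lyapunov) stability the paper needs.
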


\medskip

For $z :=[x^T :  y^T]^T$ as before, consider the map 
$$G(z) := \left[\nabla_x f( \cdot , \ep y)^T : y^T\right]^T : \R^N \to \R^N.$$ 
Let $\nabla G: \R^N \to \R^{N\times N}$ denote its Jacobian matrix. Let 
$$\N := \{z \in \R^N : \mbox{Rank}(\nabla G(z)) < N\}.$$ 
Then its image $G(\N)$ has zero Lebesgue measure by the Morse-Sard theorem (\cite{Hirsch}, pp.\ 68-74). In turn, Brouwer's invariance of domain theorem (\cite{Hurewicz}, pp.\ 95-97) then implies that $\N$ is nowhere dense, because if $\N$ contained an open set, that would get mapped to an open set in $F(\N)$, implying positive Lebesgue measure for $F(\N)$, a contradiction. It is easy to see that $\N = \left\{[x^T : y^T]^T : det\left(\nabla^2_xf(x, \ep y)\right) = 0\right\}$, which is clearly closed. Then for $y \not\in$ the closed nowhere dense set $\N$, $\nabla_xf(\cdot , \ep y)$ is non-singular at $x$ and therefore in a neighbourhood of $x$. This  implies that the set of critical points of $f( \cdot , \ep y)$ in $\N^c$ are isolated by the inverse function theorem (\cite{Spivak}, pp.\ 35-39). 
It then follows by the implicit function theorem (\cite{Spivak}, pp.\ 41-42) that given $y$,
$$\C_y := \{[x^T : y^T]^T : \nabla_xf(x, \ep y) = \0\}$$ 
is a $d$-dimensional manifold in a neighbourhood of any $[x^T,y^T]^T \not\in \N$ where $\nabla^2_xF(x,y)$ is non-singular.\\

Then \eqref{fastODE} will be a dynamics on this manifold. There can be several such manifolds, which can be locally extended as long as they remain away from $\N$ where the above application of the inverse function theorem fails. Also, they vary smoothly with $y$, again as long as $\N$ is avoided. Thus the general picture is that of several such manifolds that remain disjoint while away from $\N$ as $y$  changes according to \eqref{slowODE}, but can merge or split on $\N$, a situation akin to bifurcation theory for parametrized differential equations. (See \cite{Berglund} for a related perspective in a different context.) If we assume that away from $N$, every map $y \mapsto \lambda(y)$ satisfies: $f(\lambda(\cdot), \cdot )$ is real analytic near its local minima, then every local minimum in $\N^c$ will be a stable equilibrium for \eqref{slowODE} by \cite{Absil}.\\
 
 A finer analysis of the behaviour of the dynamics away from $\N$ will require the full force of the theory of small noise perturbations of differential equations. We discuss this next.
A common and natural way to address this issue would be through `\textit{small noise limits}'. That is, following a philosophy promoted by Kolmogorov (as reported in \cite{Eckmann}, p.\ 626), we add a small amount of noise to the dynamics. The solutions obtained in the limit as the noise goes to zero are deemed to be the `physical' solutions. (This is a common selection process in many situations with ill-posed problems, one prime example being the viscosity solution concept for Hamilton-Jacobi equations, another is the notion of `stochastically stable Nash equilibria' in evolutionary game theory \cite{Young}.)  However, the addition of noise requires careful modelling. For a single time scale iteration, one may invoke the classical theory of \cite{FW}.  Specifically, for a single ODE $\dot{z}(t) = -\nabla f(z(t))$, the natural small noise perturbation is  
$$dz^\ep(t) = -\nabla f(z^\ep(t))dt + \ep dW(t)$$
where $W(\cdot)$ is a standard brownian motion in $\R^N$. See, \textit{e.g.}, \cite{Berglund}, \cite{Schuss}. The corresponding situation in the two time scale case, however, is more complex and very limited results are available. Here we use the framework of \cite{Athreya}. We briefly describe it below.\\

Following \cite{Athreya}, we shall write the noise perturbed versions of \eqref{fastODE}-\eqref{slowODE} as\footnote{This is equivalent to the dynamics in \cite{Athreya} modulo a time scaling, more convenient for our purposes.} 
\begin{eqnarray}
dx^\ep(t) &=& -\nabla_xf(x^\ep(t), \ep y^\ep(t))dt + s(\ep)dW(t), \label{fastnoise}\\
dy^\ep(t) &=& -\ep \nabla_yf(x^\ep(t), \ep y^\ep(t))dt + \ep^{1+\alpha} dB(t), \label{slownoise}
\end{eqnarray}
for $t \geq 0$, where $W(\cdot), B(\cdot)$ are standard brownian motions in resp., $\R^d$ and $\R^s$, $0 < \alpha < 1$, and $s(\cdot)$ satisfies
\begin{eqnarray*}
s(\ep) \  &\stackrel{\ep\downarrow 0}{\rightarrow}& \ 0, \\
\ && \ \\
s(\ep) &\geq& \sqrt{\frac{K}{\log\left(1 + \frac{1}{\ep}\right)}},
\end{eqnarray*}
for some $K > \frac{2(\Lambda_1 + 2\Lambda_2)}{1 - \alpha}.$
Here $\Lambda_1, \Lambda_2 > 0$ are two constants that depend on the dynamics and need some additional conditions on $f$ - see (11) and (13) in \cite{Athreya}. In this scenario, the distribution of $y(n)$ concentrates on the global minima of $f(\lambda(\cdot),\cdot)$ with relative weights given by Proposition 2.4 d) of \cite{Athreya} (which builds upon the remarkable work of Hwang \cite{Hwang}).
The exact expressions for the relative weights can be found in \cite{Athreya}.\\

This is the `average behaviour' one sees in the $\ep\downarrow 0$ limit. For small $\ep > 0$, the stochastic phenomena are small, but very much present and different. Away from $\N$, the local neighbourhoods of local minima will form a `valley' as $y$ varies. (The terminology is for the case $d=s =1$ which is easy to visualize. We lift it to arbitrary $d,s$.) This is implicit in Lemma \ref{wedge}. What one shall see is that the trajectory follows one valley at a time, with rare jumps between valleys due to low probability noise spikes. This is not an unknown phenomenon and has been extensively analyzed in single time scale dynamics. The mean exit time from a valley is extremely large, being of the order of  $\frac{1}{\ep^2}$ (see Section 6.5 of \cite{FW}). In our set-up of two time scale dynamics, it is not a simple localized  valley as in \cite{FW}, but a parametrized one, thus forming a groove in the landscape. The dynamics then jumps across the grooves at large random  times that have distributions with decay rates $\approx \frac{1}{s(\ep)^2}$ (Section 6.5 of \cite{FW} - we apply this to dynamics in the $x$-variable treating the $y$ component as quasi-static.). The grooves, of course, can merge and split in $\N$ as pointed out earlier.\\

In the terminal regime, the analysis is somewhat easier, since we ideally have the flow of a well defined scheme \eqref{slowODE} converging to one of the critical points, in fact to local minima in view of `avoidance of traps' results such as \cite{BorkarTraps}, \cite{Brandiere}, \cite{Pemantle} along with \cite{Absil}. One possible complication is due to non-isolated minimizers of $F(\cdot,y)$, in which case we need to replace the gradient descent by a sub-gradient descent. This is allowed by Danskin's theorem \cite{Bardi}, \cite{Danskin}. The asymptotic distribution will concentrate on the points $(\lambda(y),y)$ that are local minima of the function $F(\lambda(\cdot),\cdot)$, with higher probabilities for the local minima with lower values for $F(\lambda(\cdot),\cdot)$ as shown in \cite{Aziz}, \cite{Hwang}, using the fact that for a gradient descent $\dot{x}(t) = -\nabla\Psi(x(t))$ in $\R^d$, $\Psi(\cdot) : \R^d \to \R$ itself serves as the Freidlin-Wentzell potential for the corresponding small noise perturbation given by the diffusion process $dx(t) = -\nabla\Psi(x(t))dt + \ep dW(t)$ \cite{FW}.\\

For the problematic middle regime, the earlier comments apply valley-wise outside of $\N$.

\section{Conclusions}

We have given a novel explanation of anomalous phenomena for SGD applied to over-parametrized neural networks, such as grokking, temporal double descent and catapults, using a dynamical picture. That said, it remains a stylized version of the actual SGD, nevertheless rich enough to capture the key phenomena. Some limitations are:
\begin{enumerate}

\item We assume a clear separation between the extent of dependence of $F$ on two classes of variables. This may not be so clear cut. There is also a possibility of three or more well marked scales, which can be analyzed along above lines.

\item Due to the nature of Sard's theorem, the results in the general case are only for $\N^c$. The description of `smallness' is topological, not measure theoretic, hence weaker. The set $\N$ cannot necessarily be ruled out or treated as less important, because it is perfectly likely that some or all of the trajectories of \eqref{slowODE} converge to it. In fact the latter may very well be the reason that the above analysis does not capture the observed anomalies of SGD in all their aspects. Nevertheless, we believe it is a useful step in that direction.

\item The small noise limit argument in the last section is handicapped by the fact that very little literature is available regarding this for two time scale iterations, unlike the single time scale case. There is ample opportunity to develop the basic theory itself much further and then apply it to algorithmic issues such as those considered here.

\end{enumerate}
This also opens up the possibility of a finer analysis  using the full force of singularly perturbed differential equations with noise \cite{FW}, \cite{Schuss}. This is left for the future.\\

\noindent \textbf{Acknowledgements} This research was supported by an award from Google Research Asia. The author thanks Prof.\ Mallikarjuna Rao, Prof.\ Parthe Pandit and Satush Parikh for their comments.

\end{document}